\documentclass[11pt]{amsart}
\usepackage[final]{microtype}
\usepackage[a4paper,text={160mm,254mm},centering]{geometry}
\usepackage{lmodern,hyperref,amssymb,mathtools}

\usepackage[cal=euler]{mathalpha}

\renewcommand{\leq}{\leqslant}
\renewcommand{\geq}{\geqslant}
\renewcommand{\subset}{\subseteq}

\newcommand{\Kl}{\mathrm{Kl}}
\newcommand{\IKl}{\mathrm{IKl}}
\newcommand{\Nrd}{\mathrm{Nrd}}
\newcommand{\Trd}{\mathrm{Trd}}
\newcommand{\Norm}{\mathrm{Norm}}
\newcommand{\Trace}{\mathrm{Tr}}
\newcommand{\chitriv}{\mathbb{1}}

\numberwithin{equation}{section}

\theoremstyle{remark}
\newtheorem{remark}[equation]{Remark}

\theoremstyle{plain}
\newtheorem{theorem}[equation]{Theorem}
\newtheorem{lemma}[equation]{Lemma}
\newtheorem{proposition}[equation]{Proposition}
\newtheorem{corollary}[equation]{Corollary}

\theoremstyle{definition}
\newtheorem{definition}[equation]{Definition}

\title{Exotic and inverted Kloosterman sums over semisimple algebras}

\author{Daqing Wan}
\address{Center for Discrete Mathematics,
  Chongqing University; and College of Mathematics and Statistics,
  Chongqing University, Chongqing 401331, China}
\email{dwan@math.uci.edu}

\author{Dingxin Zhang}
\address{Center for Mathematics and Interdisciplinary Sciences, Fudan
  University; and  Shanghai Institute for Mathematics and
  Interdisciplinary Sciences (SIMIS), Shanghai 200433, China}
\email{dingxinzhang@fudan.edu.cn}

\subjclass{Primary: 11L40; Secondary: 11M38}

\begin{document}

\begin{abstract}
We introduce exotic Kloosterman sums and exotic inverted Kloosterman sums attached
to non-commutative finite-dimensional semisimple algebras over a finite field
\(\mathbb{F}_q\), and prove their reduction formulae to exotic
Kloosterman and exotic inverted Kloosterman sums over commutative étale \(\mathbb{F}_q\)
algebras. We then obtain square-root estimates for these sums; for
inverted sums an explicit correction term may appear.
\end{abstract}

\maketitle

% \tableofcontents

\section{Introduction}

Let \(\mathbb{F}_q\) be a finite field, and fix a nontrivial additive
character \(\psi\colon\mathbb{F}_q\to\mathbb{C}^{\times}\).  Recall
that the classical twisted hyper-Kloosterman sum is
\[
\Kl(a;\chi_1,\ldots,\chi_m)
=
\sum_{\substack{x_1,\ldots,x_m\in\mathbb{F}_q^{\times}\\
    x_1\cdots x_m=a}}
\chi_1(x_1)\cdots\chi_m(x_m)\psi(x_1+\cdots+x_m),
\qquad a\in\mathbb{F}_q^\times.
\]
Classical Kloosterman sums are basic objects in the theory of exponential
sums.
A variant of them, called \emph{inverted Kloosterman sums}, appears in
the theory of Ramanujan graphs:
\begin{equation*}
\IKl(a; \chi_1,\ldots,\chi_m)
=
\sum_{\substack{x_1,\ldots,x_m\in\mathbb{F}_q^{\times}\\
    x_1\cdots x_m=a \\ x_1 + \cdots + x_m \neq 0}}
\chi_1(x_1)\cdots\chi_m(x_m)\psi\left( \frac{1}{x_1+\cdots+x_m} \right),
\qquad a\in\mathbb{F}_q^\times.
\end{equation*}
Deligne \cite{deligne_sommes-trig} and Katz
\cite{katz_gauss-sums-kloosterman-sums-and-monodromy} introduced
``exotic'' generalizations of Kloosterman sums, namely Kloosterman
sums over finite étale \(\mathbb{F}_q\)-algebras.  Already in Katz's
note on inverted Kloosterman sums, their exotic variants were
considered \cite{katz_note-exponential-sums}.  These exotic inverted
Kloosterman sums, together with related norm-trace counting problems,
have recently been studied by many authors
\cite{lin-wan_inverted-kloosterman,
  lin-wan_trace-norm-etale,fu-wan_exotic-inverted-kloosterman,
  wan_normtrace-kloosterman-sums-finite-semisimple-algebras}.

A related direction concerns exponential sums over matrix groups, including
matrix Kloosterman sums in the sense of
Erdélyi--Tóth and their collaborators
\cite{erdelyi-toth_matrix-kloosterman,
  erdelyi-sawin-toth_purity-locus,
  erdelyi-toth-zabradi_matrix-prime-powers}, work of
Zelingher~\cite{zelingher_matrix-kloosterman-hall-littlewood}, and
hypergeometric exponential sums over reductive groups in work of
Fu--Li \cite{fu-li_hypergeometric-reductive-groups}.  In this paper,
we extend the two constructions above from the étale case to
finite-dimensional semisimple \(\mathbb{F}_q\)-algebras.  Thus the sums
below may also be viewed as exponential sums over spaces of matrices.

\begin{definition}
Let \(M\) be a finite-dimensional semisimple algebra over
\(\mathbb{F}_q\).  Denote by \(M^{\times}\) the group of invertible
elements of \(M\), by \(\Trd\colon M \to \mathbb{F}_q\) the
\emph{reduced trace}, and by \(\Nrd\colon M \to \mathbb{F}_q\)
the \emph{reduced norm}.  Let
\(\chi\colon M^{\times} \to \mathbb{C}^{\times}\) be a multiplicative
character.  Then for \(a\in\mathbb{F}_q^\times\), we define the
\emph{Kloosterman sum} of the semisimple \(\mathbb{F}_q\)-algebra
\(M\) associated to the multiplicative character \(\chi\) by
\begin{equation*}
\Kl_M(a;\chi)
=
\sum_{\substack{x\in M^\times\\ \Nrd(x)=a}}
\chi(x)\psi(\Trd(x)).
\end{equation*}
\end{definition}

\begin{remark}
By Wedderburn's theorem and the triviality of the Brauer group of a
finite field, any finite-dimensional semisimple
\(\mathbb{F}_q\)-algebra \(M\) is isomorphic to
\begin{equation}\label{eq:M-decomp}
M\simeq
M_{n_1}(\mathbb{F}_{q^{d_1}})
\times\cdots\times
M_{n_k}(\mathbb{F}_{q^{d_k}}).
\end{equation}
We shall denote by
\[
B=\mathbb{F}_{q^{d_1}}\times\cdots\times\mathbb{F}_{q^{d_k}}
\]
the center of \(M\).
\end{remark}

In this paper, we restrict to \emph{determinant-type characters}:
under the identification \eqref{eq:M-decomp}, we say
\(\chi=(\chi_1,\ldots,\chi_k)\) is of determinant type if it comes
from a multiplicative character of \(B^{\times}\), pulled back along
the determinant map
\[
M^\times\longrightarrow B^\times,\qquad
(A_1,\ldots,A_k)\longmapsto (\det A_1,\ldots,\det A_k).
\]
Thus, for
\(x=(A_1,\ldots,A_k)\in M^\times\),
\[
\chi(x)=\prod_i \chi_i(\det A_i).
\]

Under the identification \eqref{eq:M-decomp}, the reduced norm and
reduced trace of \(M\) are given by the following formulas:
\[
\begin{aligned}
  \Nrd(A_1,\ldots,A_k)
  &=
    \prod_{i=1}^k
    \Norm_{\mathbb{F}_{q^{d_i}}/\mathbb{F}_q}(\det A_i),\\
  \Trd(A_1,\ldots,A_k)
  &=
    \sum_{i=1}^k
    \Trace_{\mathbb{F}_{q^{d_i}}/\mathbb{F}_q}(\operatorname{Tr} A_i).
\end{aligned}
\]

With this notation, we set
\begin{equation}
\label{eq:reduced-algebra}
B^{\prime}=
\mathbb{F}_{q^{d_1}}^{n_1}
\times\cdots\times
\mathbb{F}_{q^{d_k}}^{n_k}.
\end{equation}
Concretely, an element of \((B^{\prime})^\times\) is a tuple
\((x_{ij})_{1\leq i\leq k,\,1\leq j\leq n_i}\) with every coordinate
nonzero.  This is a finite \'etale \(\mathbb{F}_q\)-algebra.  Let
\(\eta\) be the character of \((B^{\prime})^\times\) given by
\begin{equation}
\label{eq:induced-char}
\eta((x_{ij}))=\prod_{i=1}^k\prod_{j=1}^{n_i}\chi_i(x_{ij}).
\end{equation}
Our first main result is the following reduction formula.  It reduces
the Kloosterman sum over the noncommutative algebra \(M\) to the
commutative étale algebra \(B^{\prime}\), where Katz's estimates apply.

\begin{theorem}\label{thm:reduction}
With notation as above, and with \(\chi\) of determinant type, for every
\(a\in\mathbb{F}_q^\times\), we have
\begin{equation*}
\Kl_M(a;\chi)
=
q^{N} \cdot \Kl_{B^{\prime}}(a;\eta),
\qquad
N=\sum_{i=1}^k d_i\binom{n_i}{2}.
\end{equation*}
\end{theorem}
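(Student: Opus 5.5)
Both sides factor over the simple components, so the first step is to reduce to the case \(k=1\). The set \(\{x\in M^\times:\Nrd(x)=a\}\) is a fiber of a product map. Introducing the orthogonality relation for multiplicative characters of \(\mathbb{F}_q^\times\),
\[
\mathbf{1}_{\Nrd(x)=a}=\frac{1}{q-1}\sum_{\lambda}\lambda(\Nrd(x))\,\overline{\lambda}(a),
\]
turns \(\Kl_M(a;\chi)\) into \(\frac{1}{q-1}\sum_\lambda \overline{\lambda}(a)\prod_i S_i(\lambda)\). Here
\[
S_i(\lambda)=\sum_{A\in GL_{n_i}(\mathbb{F}_{q^{d_i}})} (\chi_i\cdot\lambda\circ\Norm)(\det A)\,\psi(\Trace\operatorname{Tr}A).
\]
The same expansion applied to \(\Kl_{B'}(a;\eta)\) gives \(\prod_i T_i(\lambda)\), where \(T_i(\lambda)\) is the analogous sum over the diagonal torus \((\mathbb{F}_{q^{d_i}}^\times)^{n_i}\). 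So it suffices to prove, for a single \(n\), \(d\), and an arbitrary character \(\theta\) of \(\mathbb{F}_{Q}^\times\) with \(Q=q^d\) and \(\psi_Q=\psi\circ\Trace\),
\[
\sum_{A\in GL_n(\mathbb{F}_Q)}\theta(\det A)\,\psi_Q(\operatorname{Tr}A)=Q^{\binom n2}\Big(\sum_{x\in\mathbb{F}_Q^\times}\theta(x)\psi_Q(x)\Big)^n=Q^{\binom n2}g(\theta)^n .
\]
Multiplying these identities over \(i\) gives exactly the factor \(q^N\).

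This matrix Gauss sum identity is the main obstacle. My first attempt is the Bruhat decomposition together with the Borel subgroup. The integrand is a class function, so it is tempting to average over conjugation, but a more direct route is available. Write \(G=GL_n\), \(U\) for the upper unipotent group and \(T\) for the diagonal torus. I will decompose \(G=\bigsqcup_w UwTU\), writing each element uniquely as \(u\,\dot w t\,u'\) with \(u'\) running over \(U\cap w^{-1}U^-w\)-type coordinates. Using conjugation invariance of \(\operatorname{Tr}\) and \(\det\), rewrite \(\operatorname{Tr}(u\dot w t u')=\operatorname{Tr}(\dot w t u' u)\). For \(w=1\) the summand depends only on \(t\), because \(\operatorname{Tr}(tu)=\operatorname{Tr}t\) for \(u\in U\). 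This cell contributes \(|U|\,g(\theta)^n=Q^{\binom n2}g(\theta)^n\). The remaining work is to show that every cell with \(w\ne1\) contributes \(0\).

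For \(w\neq 1\), pick a simple reflection position \(i\) with \(w(i)>w(i+1)\), or more concretely an entry of \(u'\) that enters \(\operatorname{Tr}(\dot w t u'u)\) linearly. Summing \(\psi_Q\) over that free coordinate then kills the cell. I expect some care is needed to choose the coordinate so that it appears linearly with a nonzero coefficient (a torus entry) and appears nowhere else.

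If this bookkeeping becomes messy, the fallback is induction on \(n\). Let \(P\) be the parabolic subgroup stabilizing a line. Decompose \(\mathbb{F}_Q^n\) by the last row and column, and sum first over the last column while the upper-left block \(A'\) is fixed and invertible. Via the Schur complement \(\det A=\det A'\,(d-cA'^{-1}b)\) and \(\operatorname{Tr}A=\operatorname{Tr}A'+d\), substituting \(d\mapsto d-cA'^{-1}b\) gives a factor \(\psi_Q(cA'^{-1}b)\). Summing this over \(b\) gives \(Q^{n-1}\) exactly when \(c=0\) and zero otherwise. This yields \(Q^{n-1}\,g(\theta)\) times the size-\((n-1)\) sum.

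The case where \(A'\) is singular still has to be handled. It can be absorbed by a row and column permutation argument, or avoided by choosing the Bruhat route above. Either way the induction gives \(Q^{(n-1)+\binom{n-1}2}g(\theta)^n=Q^{\binom n2}g(\theta)^n\), which completes the proof.
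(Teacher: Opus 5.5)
Your outer reduction is correct and differs from the paper's. The paper groups by the componentwise determinants \(y_i=\det A_i\) and pulls out \(\chi_i(y_i)\). It then proves the fixed-determinant identity \(\sum_{\det A=y}\psi_F(\operatorname{Tr}A)=Q^{\binom e2}\sum_{x_1\cdots x_e=y}\psi_F(x_1+\cdots+x_e)\) for each \(y\). You instead detect \(\Nrd(x)=a\) with characters of \(\mathbb{F}_q^\times\). This factors everything over \(i\) and reduces to the twisted Gauss sum identity \(\sum_{A\in\mathrm{GL}_n(\mathbb{F}_Q)}\theta(\det A)\psi_Q(\operatorname{Tr}A)=Q^{\binom n2}g(\theta)^n\) for all \(\theta\). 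By Fourier inversion on \(\mathbb{F}_Q^\times\) the two identities are equivalent. Your formulation makes the product structure transparent and lets you quote the classical \(\mathrm{GL}_n\) Gauss sum formula; the paper's avoids the character average altogether.

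The core, however, is the same Bruhat-cell vanishing as in the paper, and you leave the decisive step undone; your concrete suggestion fails. Take \(\dot w\) with \(\dot w e_a=e_{w(a)}\). In your parametrization \(u\dot w t u'\), with \(u\in U\) and \(u'\in U\cap\dot w^{-1}U^-\dot w\), the coefficient of \(u'_{ab}\) in \(\operatorname{Tr}(\dot w t u'u)=\operatorname{Tr}(u\dot w t u')\) is \((u\dot w t)_{ba}=u_{b,w(a)}t_a\). This depends on \(u\). At a descent \(i\) with \(w(i)>i+1\) it is a free variable times \(t_i\), so it can vanish. For example, for \(w=(3,1,2)\) in one-line notation the only descent is \(i=1\), and \(u'_{12}\) has coefficient \(u_{23}t_1\).

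The missing idea is the paper's change of variables. For fixed \(u'\), the product \(v=u'u\) runs over all of \(U\), so the cell equals \(\#(U\cap\dot w^{-1}U^-\dot w)\,\theta(\det\dot w)\sum_t\theta(\det t)\sum_{v\in U}\psi_Q(\operatorname{Tr}(\dot w t v))\). Now \(\operatorname{Tr}(\dot w t v)\) is affine-linear in the free entries of \(v\), and \(v_{sr}\) (\(s<r\)) carries coefficient \((\dot w t)_{rs}\). Since \(w\neq 1\), there is \(s\) with \(w(s)>s\). Then \(v_{s,w(s)}\) has coefficient \(t_s\neq 0\), and the \(v\)-sum vanishes.

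A \(u'\)-coordinate also works if you choose it correctly. Take \((a,b)=(w^{-1}(m),m)\), where \(m\) is the largest element of a nontrivial cycle of \(w\). This is an inversion, and its coefficient is \(u_{mm}t_a=t_a\).

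In your fallback induction, the singular case needs no permutation trick. If \(\det A'=0\), expansion along the last row shows \(\det A\) is independent of \(d\). Summing \(\psi_Q(d)\) over \(d\in\mathbb{F}_Q\) then kills that part.
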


For the trivial multiplicative character and \(M=M_n(\mathbb{F}_q)\),
Theorem~\ref{thm:reduction} recovers Kim's formula \cite{kim}
\[
\Kl_{M_n(\mathbb{F}_q)}(a;\chitriv)
= q^{\binom{n}{2}}\,\Kl_n(a).
\]
For general semisimple \(M\), it reduces the problem to Katz's
commutative finite-\'etale case.  Combining with Katz's square-root
cancellation estimate for exotic Kloosterman sums on \'etale
\(\mathbb{F}_q\)-algebras gives the following square-root cancellation
estimate.

\begin{corollary}\label{cor:bound}
We have
\[
|\Kl_M(a;\chi)|
\leq
(\dim B^{\prime})\, q^{\frac{\dim M-1}2}
=
(d_1n_1 + \cdots + d_k n_k)
q^{\frac{d_1 n_1^2 + \cdots + d_k n_k^2 - 1}2}.
\]
\end{corollary}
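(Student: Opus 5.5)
The plan is to deduce the corollary directly from Theorem~\ref{thm:reduction} together with Katz's square-root estimate for exotic Kloosterman sums over finite étale \(\mathbb{F}_q\)-algebras. By Theorem~\ref{thm:reduction},
\[
|\Kl_M(a;\chi)| = q^{N}\,|\Kl_{B^{\prime}}(a;\eta)|,\qquad N=\sum_i d_i\binom{n_i}{2}.
\]
So it suffices to bound the Kloosterman sum on the étale algebra \(B^{\prime}\). This algebra has dimension \(D=\dim B^{\prime}=\sum_i d_in_i\), and \(\eta\) is a multiplicative character of \((B^{\prime})^\times\).

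For the étale sum I will invoke Katz's theorem \cite{katz_gauss-sums-kloosterman-sums-and-monodromy}, building on Deligne \cite{deligne_sommes-trig}. Consider the map \(\Nrd\colon (B^{\prime})^\times\to\mathbb{G}_m\), which is the norm \(B^{\prime}\to\mathbb{F}_q\). Twist the trace character \(\psi\circ\Trace\) by the Kummer sheaf attached to \(\eta\) and push forward with compact support along the norm. Katz shows this gives a lisse sheaf on \(\mathbb{G}_m\), of rank \(D\), pure of weight \(D-1\), concentrated in a single degree. Over \(\overline{\mathbb{F}}_q\), \(B^{\prime}\) becomes \(\overline{\mathbb{F}}_q^{D}\) and the sheaf becomes a twisted hyper-Kloosterman sheaf. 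Taking traces of Frobenius then gives
\[
|\Kl_{B^{\prime}}(a;\eta)|\leq D\, q^{\frac{D-1}{2}}.
\]
The one point to check is that Katz's statement covers arbitrary characters \(\eta\) of \((B^{\prime})^\times\), not only those pulled back from \(\mathbb{F}_q^\times\). It does: the Kummer twist on each geometric factor is tame at \(0\) and \(\infty\), so rank, purity and vanishing of the other cohomology are unaffected.

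It remains to match the exponents:
\[
N+\frac{D-1}{2}=\sum_i\frac{d_in_i(n_i-1)+d_in_i}{2}-\frac12=\frac{\sum_i d_in_i^2-1}{2}=\frac{\dim M-1}{2},
\]
using \(\dim_{\mathbb{F}_q}M=\sum_i d_in_i^2\). This gives exactly the stated bound. No step is a real obstacle once Theorem~\ref{thm:reduction} is available; the only care needed is in citing Katz's estimate in the twisted, non-split étale form.
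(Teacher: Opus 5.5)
Your proposal is correct and follows the paper's proof. Both reduce via Theorem~\ref{thm:reduction} to the étale algebra \(B'\), apply Katz's square-root bound (the paper cites his Theorem~8.8.5, which already covers arbitrary characters of \(E^{\times}\), so your sheaf-theoretic sketch is optional), and match exponents using \(\dim M = \dim B' + 2N\).
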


\begin{proof}
Since \(B^{\prime}\) is a commutative étale algebra over
\(\mathbb{F}_q\), the sum \(\Kl_{B^{\prime}}(a;\eta)\) is the
so-called \emph{exotic Kloosterman sum} studied by
\cite{katz_gauss-sums-kloosterman-sums-and-monodromy}.  In
\cite[Theorem~8.8.5]{katz_gauss-sums-kloosterman-sums-and-monodromy},
it is shown that for any étale \(\mathbb{F}_q\)-algebra \(E\), any
multiplicative character
\(\theta\colon E^{\times}\to \mathbb{C}^{\times}\), and any
\(a \in \mathbb{F}_q^{\times}\), we have
\(|\Kl_E(a;\theta)| \leq (\dim_{\mathbb{F}_q} E)
q^{\frac{\dim_{\mathbb{F}_q} E - 1}2}\).  Since
\(\dim_{\mathbb{F}_q} M = \dim_{\mathbb{F}_q} B^{\prime} + 2N\),
the corollary follows from this estimate applied to \(B^{\prime}\) and
the reduction formula,
Theorem~\ref{thm:reduction}.
\end{proof}

\begin{remark}
The determinant-type hypothesis is automatic except for one small
exception.  For \(G=\mathrm{GL}_n(F)\), every multiplicative character
\(G\to\mathbb{C}^{\times}\) factors through the determinant unless
\(G=\mathrm{GL}_2(\mathbb{F}_2)\).  Equivalently, the determinant gives
the abelianization of \(\mathrm{GL}_n(F)\), except that
\(\mathrm{GL}_2(\mathbb{F}_2)\simeq S_3\).  In this exceptional case
\(\mathbb{F}_2^{\times}\) is trivial, while the sign character of
\(S_3\) gives a non-determinant-type character.  Consequently, for a
product of matrix algebras, the only multiplicative characters not of
determinant type are obtained by multiplying by sign characters on the
factors \(M_2(\mathbb{F}_2)\).  Thus, if \(M\) has no
\(M_2(\mathbb{F}_2)\)-factor, every multiplicative character of
\(M^{\times}\) is of determinant type.
\end{remark}

Next, let us turn to the inverted sums.

\begin{definition}
Let \(M\) be a finite-dimensional semisimple algebra over
\(\mathbb{F}_q\), and let
\(\chi\colon M^{\times} \to \mathbb{C}^{\times}\) be a multiplicative
character.  For \(a\in\mathbb{F}_q^\times\), we define the
\emph{inverted Kloosterman sum} over \(M\), associated to the
multiplicative character \(\chi\), by
\begin{equation*}
\IKl_M(a;\chi)
=
\sum_{\substack{x\in M^\times\\ \Nrd(x)=a \\ \Trd(x) \neq 0}}
\chi(x)\,\psi\left( \frac{1}{\Trd(x)} \right).
\end{equation*}
\end{definition}

We shall also prove the corresponding reduction formula for inverted
Kloosterman sums, with an explicit correction term.  We recall the
definition of the algebra
\(B^{\prime}\) \eqref{eq:reduced-algebra} and the multiplicative
character \(\eta\) on \((B^{\prime})^{\times}\)
\eqref{eq:induced-char}.

\begin{theorem}\label{thm:intro-inverted-reduction}
Assume that \(\chi\) is of determinant type, and let
\(a\in\mathbb{F}_q^\times\).  Denote by
\(\nu\colon (B^{\prime})^{\times} \to \mathbb{F}_q^{\times}\) the norm
map.  Thus
\[
\nu((x_{ij}))=
\prod_i
\Norm_{\mathbb{F}_{q^{d_i}}/\mathbb{F}_q}(x_{i1}\cdots x_{in_i}).
\]
Put
\[
N=\sum_{i=1}^k d_i\binom{n_i}{2},
\]
and let
\(\mathcal{B}_M\) be the product of the groups of invertible upper
triangular matrices in the factors
\(\mathrm{GL}_{n_i}(\mathbb{F}_{q^{d_i}})\).  Then the following
assertions hold.
\begin{enumerate}
\item If \(\eta\) is nontrivial on \(\ker(\nu)\), then
\[
\IKl_M(a;\chi)=q^N\, \IKl_{B^{\prime}}(a;\eta).
\]
\item If \(\eta\) is trivial on \(\ker(\nu)\), equivalently if
\(\eta = \rho \circ \nu\) for some multiplicative character
\(\rho\) of \(\mathbb{F}_q^\times\), then
\[
\IKl_M(a;\chi)
= q^N\,\IKl_{B^{\prime}}(a;\eta)
-\rho(a)\frac{\#M^\times-\#\mathcal{B}_M}{q(q-1)}.
\]
Equivalently, since \(\#\mathcal{B}_M = q^N\# (B^{\prime})^{\times}\),
we have
\[
\IKl_M(a;\chi) + \rho(a)\frac{\#M^\times}{q(q-1)}
= q^N\,\left(\IKl_{B^{\prime}}(a;\eta)
+\rho(a)\frac{\# ({B}^{\prime})^{\times}}{q(q-1)}\right).
\]
\end{enumerate}
\end{theorem}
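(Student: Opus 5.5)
The plan is to reduce the inverted sum to ordinary Kloosterman sums by finite Fourier analysis on \(\mathbb{F}_q\), apply Theorem~\ref{thm:reduction} to each nonzero frequency, and handle the zero frequency by a direct count. The correction term in part (2) should come entirely from that zero frequency.

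First, set up the expansion. Define \(f\colon\mathbb{F}_q\to\mathbb{C}\) by \(f(t)=\psi(1/t)\) for \(t\neq 0\) and \(f(0)=0\). Expand it as
\[
f(t)=\sum_{u\in\mathbb{F}_q}\hat f(u)\psi(ut),\qquad \hat f(u)=\frac1q\sum_{t\in\mathbb{F}_q}f(t)\psi(-ut).
\]
In particular \(\hat f(0)=\frac1q\sum_{t\neq0}\psi(1/t)=-\frac1q\). For any semisimple \(M\) and determinant-type \(\chi\), put
\[
S_M(u)=\sum_{x\in M^\times,\ \Nrd(x)=a}\chi(x)\,\psi(u\Trd(x)).
\]
Then
\[
\IKl_M(a;\chi)=\sum_u\hat f(u)S_M(u),
\]
and the same identity holds for \(B'\) with \(\eta\), \(\nu\) and the ordinary trace.

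Second, treat the nonzero frequencies. For \(u\neq0\), \(\psi_u(t)=\psi(ut)\) is again a nontrivial additive character. The proof of Theorem~\ref{thm:reduction} uses nothing about \(\psi\) beyond its nontriviality, so it gives
\[
S_M(u)=q^N S_{B'}(u)\qquad(u\neq0).
\]
Hence
\[
\IKl_M(a;\chi)-q^N\IKl_{B'}(a;\eta)=-\frac1q\bigl(S_M(0)-q^NS_{B'}(0)\bigr).
\]

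Third, compute the zero frequency. Here \(S_M(0)=\sum_{\Nrd(x)=a}\chi(x)\) is a character sum over a coset of \(\ker\Nrd\). The reduced norm \(\Nrd\colon M^\times\to\mathbb{F}_q^\times\) is surjective, since determinants and field norms are. So \(S_M(0)\) vanishes unless \(\chi\) is trivial on \(\ker\Nrd\). In that case \(\chi=\rho\circ\Nrd\), and \(S_M(0)=\rho(a)\#M^\times/(q-1)\). The same statement holds for \(B'\) with \(\eta\) and \(\nu\).

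It remains to compare the two triviality conditions. Consider the homomorphism \(\delta\colon M^\times\to(B')^\times\) sending \((A_i)\) to the tuple with \(x_{i1}=\det A_i\) and \(x_{ij}=1\) for \(j>1\). It satisfies \(\chi=\eta\circ\delta\) and \(\Nrd=\nu\circ\delta\). Moreover, \(\eta\) is invariant under moving entries within each block, since \(\eta\) depends only on the products \(x_{i1}\cdots x_{in_i}\). Together these show that \(\chi\) is trivial on \(\ker\Nrd\) if and only if \(\eta\) is trivial on \(\ker\nu\), and in that case the same \(\rho\) works for both. I expect this compatibility, including the equivalence of "trivial on \(\ker\nu\)" with \(\eta=\rho\circ\nu\), to be the only step needing some care.

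Substituting then gives the result. In case (1) both zero-frequency terms vanish. In case (2) the difference is
\[
-\rho(a)\,\frac{\#M^\times-q^N\#(B')^\times}{q(q-1)},
\]
and \(q^N\#(B')^\times=\#\mathcal{B}_M\), because each upper triangular group has diagonal \((\mathbb{F}_{q^{d_i}}^\times)^{n_i}\) and \(d_i\binom{n_i}{2}\) free entries over \(\mathbb{F}_q\).
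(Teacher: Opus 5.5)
Your proof is correct, but it follows a different route from the paper. The paper does not invoke Theorem~\ref{thm:reduction} here. It redoes the Bruhat decomposition \(M^\times=\bigsqcup_w \mathcal{U}_w\dot w\mathcal{B}_M\) directly for the inverted sum, and replaces the orthogonality relation \(\sum_z\psi(cz)=0\) by Lemma~\ref{lem:inverted-affine}, namely \(\sum_{L(z)\neq0}\psi(L(z)^{-1})=-q^{r-1}\) for nonconstant affine \(L\). This gives the cell-by-cell formula of Proposition~\ref{prop:inverted-master}: the identity cell contributes \(q^N\IKl_{B'}(a;\eta)\), and each cell \(w\neq1\) contributes \(-q^{N-1+\ell_d(w)}\sum_{\nu(t)=a}\eta(t)\). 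Orthogonality on \(\nu^{-1}(a)\) and the count \(\#M^\times=\#\mathcal{B}_M\sum_w q^{\ell_d(w)}\) then give the closed form.

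Your Fourier expansion of \(t\mapsto\psi(1/t)\) instead uses Theorem~\ref{thm:reduction} as a black box, applied to each nontrivial character \(\psi_u\) with \(u\neq0\). The whole correction is then isolated in the zero mode \(\hat f(0)=-1/q\). This also explains the paper's observation that non-identity cells contribute a constant rather than zero: on such a cell \(\Trd\) is equidistributed, so only the mean of \(f\) survives. You avoid both Lemma~\ref{lem:inverted-affine} and the Bruhat cell count, since \(S_M(0)=\rho(a)\#M^\times/(q-1)\) is computed directly.

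Your argument also gives more: it works verbatim for any function of the reduced trace in place of \(\psi(1/\cdot)\). In other words, the \(\chi\)-weighted distribution of \(\Trd\) on \(\Nrd^{-1}(a)\) equals \(q^N\) times that of the trace of \(B'\) on \(\nu^{-1}(a)\), up to an additive constant.

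The price is that your zero mode lives on \(M^\times\) rather than on the torus \((B')^\times\). You therefore have to match ``\(\chi\) trivial on \(\ker\Nrd\)'' with ``\(\eta\) trivial on \(\ker\nu\)'' with the same \(\rho\), a step the paper never needs. Your map \(\delta\), together with the fact that \(\eta\) and \(\nu\) depend only on the block products \(x_{i1}\cdots x_{in_i}\), handles this correctly. The paper's computation, in turn, records exactly which cells contribute and by how much.
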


Combining the reduction theorem with the estimates in
\cite[Theorems~1.1--1.2]{fu-wan_exotic-inverted-kloosterman} for
\(\IKl_{B^{\prime}}(a;\eta)\) gives the corresponding square-root
estimate.

\begin{theorem}
Let
\[
M = M_{n_1}(\mathbb{F}_{q^{d_1}}) \times \dots \times
M_{n_k}(\mathbb{F}_{q^{d_k}}),
\qquad
n = \sum d_i n_i^2 \geq 2,
\qquad
m = \sum d_i n_i.
\]
In the notation of Theorem~\ref{thm:intro-inverted-reduction}, the
following estimates hold.
\begin{enumerate}
\item Suppose that \(p \nmid m\).
\begin{itemize}
\item If \(\eta\) is trivial on \(\ker(\nu)\), then
\[
\left| {\rm IK}_M(a, \chi) + \rho(a)\frac{\# M^\times}{q(q-1)} \right|
\leq 2m \cdot q^{\frac{n-1}{2}}.
\]
\item If \(\eta\) is nontrivial on \(\ker(\nu)\), then
\[
|{\rm IK}_M(a, \chi)| \leq 2m \cdot q^{\frac{n-1}{2}}.
\]
\end{itemize}

\item Suppose that \(p \mid m\), and that either \(p > 2\) or
\(m \not\equiv 2 \pmod 4\).
\begin{itemize}
\item If \(\eta\) is trivial on \(\ker(\nu)\), then
\[
\left| {\rm IK}_M(a, \chi) + \rho(a) \frac{\# M^\times}{q(q-1)} \right|
\leq m \cdot q^{\frac{n-1}{2}}.
\]
\item If \(\eta\) is nontrivial on \(\ker(\nu)\), then
\[
|{\rm IK}_M(a, \chi)| \leq m \cdot q^{\frac{n-1}{2}}.
\]
\end{itemize}
\end{enumerate}
\end{theorem}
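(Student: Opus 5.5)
The plan is to combine Theorem~\ref{thm:intro-inverted-reduction} with the estimates of \cite[Theorems~1.1--1.2]{fu-wan_exotic-inverted-kloosterman} for exotic inverted Kloosterman sums over the étale algebra \(B^{\prime}\). Throughout, \(\dim_{\mathbb{F}_q} B^{\prime} = \sum d_i n_i = m\) and \(\dim_{\mathbb{F}_q} M = n = m + 2N\). For an étale algebra \(E\) of dimension \(m\) with character \(\theta\), those theorems should give the following bounds:
\[
\Bigl|\IKl_E(a;\theta) + \rho(a)\tfrac{\#E^\times}{q(q-1)}\Bigr| \leq C\, q^{\frac{m-1}{2}}
\]
when \(\theta=\rho\circ\nu\), and \(|\IKl_E(a;\theta)|\leq C q^{\frac{m-1}{2}}\) otherwise. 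Here \(C=2m\) if \(p\nmid m\), and \(C=m\) if \(p\mid m\) with \(p>2\) or \(m\not\equiv 2 \pmod 4\). The first step is to check that these hypotheses match ours. In particular, the norm map \(\nu\) and the dichotomy ``\(\eta\) trivial or nontrivial on \(\ker\nu\)'' should be exactly the dichotomy used there, with \(\eta=\rho\circ\nu\) in the trivial case.

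In the case where \(\eta\) is nontrivial on \(\ker\nu\), part (1) of Theorem~\ref{thm:intro-inverted-reduction} gives \(\IKl_M(a;\chi)=q^N\IKl_{B^{\prime}}(a;\eta)\). Hence \(|\IKl_M(a;\chi)|\leq C q^{N+\frac{m-1}{2}} = C q^{\frac{n-1}{2}}\), since \(2N+m=n\).

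In the case where \(\eta=\rho\circ\nu\), I would use the second (equivalent) form in part (2). The left side of the desired inequality equals \(q^N\) times \(\IKl_{B^{\prime}}(a;\eta)+\rho(a)\#(B^{\prime})^\times/(q(q-1))\). This is precisely the corrected quantity bounded by Fu--Wan, so the same exponent count gives \(C q^{\frac{n-1}{2}}\).

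The main obstacle is the bookkeeping of hypotheses. The bounds in \cite{fu-wan_exotic-inverted-kloosterman} presumably require \(\dim E\geq 2\), whereas the statement only assumes \(n\geq 2\). If \(m=1\), then \(M=\mathbb{F}_q\) and \(n=1\), so \(n\geq 2\) forces \(m\geq 2\) unless some \(n_i\geq 2\) with \(m\) small. In fact \(n_i\geq 2\) already forces \(m\geq 2\), so \(m\geq 2\) holds whenever \(n \ge 2\).

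I would also verify two things in Fu--Wan. First, the sign convention of the correction term must match. Second, their excluded case (\(p=2\), \(m\equiv 2 \pmod 4\)) must coincide with ours, which follows because the relevant parameter there is \(\dim B^{\prime}=m\).
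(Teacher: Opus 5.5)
Your proposal is correct and follows the paper's argument. The paper's proof is exactly this combination: Theorem~\ref{thm:intro-inverted-reduction} (its corrected form in case (2)), the Fu--Wan bounds for the étale algebra \(B^{\prime}\) of dimension \(m\), and the identity \(n = m + 2N\) to absorb the factor \(q^N\); your side check that \(n\geq 2\) forces \(m\geq 2\) is also correct.
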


In the wild case \(p \mid m\), the estimate is stronger: the
coefficient is \(m\), not \(2m\), as in the tame case \(p\nmid m\).

\section{Proof of Theorem~\ref*{thm:reduction}}

We shall begin by considering the case where \(k = 1\) and no
multiplicative character is present.  The following lemma is due to Kim
\cite{kim}, but we include the
proof for completeness.  The presentation is slightly different from
Kim's proof, which uses an induction procedure.

\begin{lemma}\label{lem:matrix-reduction}
Let \(F=\mathbb{F}_{q^d}\), let
\(\psi_F=\psi\circ\Trace_{F/\mathbb{F}_q}\), and let
\(Q=q^d\).  For \(e\geq 1\) and \(y\in F^\times\),
\begin{equation}\label{eq:matrix-reduction}
\sum_{\substack{A\in\mathrm{GL}_e(F)\\ \det A=y}}
\psi_F(\operatorname{Tr}A)
=
Q^{\binom{e}{2}}
\sum_{\substack{x_1,\ldots,x_e\in F^\times\\  x_1\cdots x_e=y}}
\psi_F(x_1+\cdots+x_e).
\end{equation}
\end{lemma}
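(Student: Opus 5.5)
We prove the identity by comparing both sides as class functions of the conjugacy type of \(A\).

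The function \(A\mapsto\psi_F(\operatorname{Tr}A)\) depends only on the characteristic polynomial of \(A\). The same holds for the condition \(\det A=y\). So the left side of \eqref{eq:matrix-reduction} equals
\[
\sum_{f}\#\{A\in\mathrm{GL}_e(F):\operatorname{char}_A=f\}\,\psi_F(-c_1(f)).
\]
Here \(f=T^e+c_1T^{e-1}+\cdots\) runs over monic degree-\(e\) polynomials with \((-1)^e f(0)=y\). This reduces the problem to a counting statement, and we avoid counting matrices with a given characteristic polynomial exactly (which involves all the Jordan type data). Instead we use a Fourier-theoretic route that makes \(\operatorname{Tr}\) linear.

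\textbf{Step 1: split along a Borel.} Let \(U\subset\mathrm{GL}_e(F)\) be the unipotent upper triangular group and \(T\) the diagonal torus. For each \(A\), consider the sum over pairs \((A,g)\) with \(g\in\mathrm{GL}_e(F)/B\) (complete flags) such that \(g^{-1}Ag\) is upper triangular, that is, \(A\) stabilizes the flag. The issue is that not every \(A\) has a stable full flag over \(F\), and the number of stable flags varies. So we plan a different, cleaner approach.

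\textbf{Step 2: induct on \(e\) via the last column (Kim-style, but done directly).} Write
\[
A=\begin{pmatrix}A'&v\\ w&t\end{pmatrix},
\]
and sum over \(w\in F^{e-1}\) first. The obstacle is that \(\det A\) is not linear in \(w\).

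\textbf{Step 1 instead: the Gauss-sum expansion.} Expand the constraint \(\det A=y\) with multiplicative characters:
\[
\mathbf 1_{\det A=y}=\frac1{Q-1}\sum_{\lambda}\bar\lambda(y)\lambda(\det A).
\]
It then suffices to prove, for every character \(\lambda\) of \(F^\times\),
\[
\sum_{A\in\mathrm{GL}_e(F)}\lambda(\det A)\psi_F(\operatorname{Tr}A)=Q^{\binom e2}\,G(\lambda)^e,
\qquad
G(\lambda)=\sum_{x\in F^\times}\lambda(x)\psi_F(x).
\]
The right side of \eqref{eq:matrix-reduction}, expanded the same way, gives exactly \(\frac1{Q-1}\sum_\lambda\bar\lambda(y)G(\lambda)^e\), so the lemma follows.

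\textbf{Step 2: prove the matrix Gauss-sum identity by induction on \(e\).} Use the Bruhat decomposition \(\mathrm{GL}_e=\bigsqcup_{w}UwTU\), or more simply parametrize \(A\) through its first row and column. Fix the first row \(r=(a_{11},r')\) and column. Write
\[
A=\begin{pmatrix}a&r'\\ c&D\end{pmatrix}.
\]
Consider two cases.

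If \(a\neq0\), then \(\det A=a\det(D-a^{-1}cr')\) and \(\operatorname{Tr}A=a+\operatorname{Tr}D\). Substitute \(D'=D-a^{-1}cr'\), so that \(\operatorname{Tr}D=\operatorname{Tr}D'+a^{-1}r'c\). Summing over \(c,r'\in F^{e-1}\) gives
\[
\sum_{c,r'}\psi_F(a^{-1}r'\cdot c)=Q^{e-1}.
\]
This comes from summing \(r'\) over \(F^{e-1}\): the character sum vanishes unless \(c=0\), leaving \(Q^{e-1}\) choices of \(r'\). This contributes \(Q^{e-1}G(\lambda)\) times the \((e-1)\)-sum.

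If \(a=0\), we must show the contribution vanishes. Take the substitution \(A\mapsto A+sE_{11}\), or better, conjugation and translation arguments: for \(a=0\) the sum over \(D\) paired with \(\psi_F(\operatorname{Tr}D)\) and \(\det\) of a matrix with zero corner. This vanishing is the main obstacle. First try to show that \(\sum_{A:a_{11}=0}\lambda(\det A)\psi_F(\operatorname{Tr}A)=0\). Averaging over conjugation by \(g\) should let us replace the condition \(a_{11}=0\) by an average over all lines, giving
\[
\sum_A\lambda(\det A)\psi_F(\operatorname{Tr}A)\cdot\frac{\#\{\ell: \ell^*A\ell=0\}}{\#\ell}.
\]
The result should then follow from the total sum computed via a second, symmetric decomposition. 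If that fails, fall back on Kim's induction. The induction then yields
\[
Q^{(e-1)+\binom{e-1}2}G(\lambda)^e=Q^{\binom e2}G(\lambda)^e.
\]
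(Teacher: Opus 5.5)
Your character expansion is correct: it reduces the lemma to \(\sum_{A\in\mathrm{GL}_e(F)}\lambda(\det A)\psi_F(\operatorname{Tr}A)=Q^{\binom e2}G(\lambda)^e\). Your \(a_{11}\neq 0\) computation is also right: the Schur complement \(D'=D-a^{-1}cr'\) gives exactly \(Q^{e-1}G(\lambda)S_{e-1}\).

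There is a genuine gap, though. The content of the lemma lies in the step you call ``the main obstacle'' and leave open: the vanishing of the contribution of invertible \(A\) with \(a_{11}=0\). This plays the same role as the vanishing of the cells \(w\neq 1\) in the paper's proof. Your averaging sketch does not establish it as written:
\begin{enumerate}
\item Under conjugation, the condition \(a_{11}=e_1^{*}(Ae_1)=0\) becomes \(\varphi(Av)=0\) for pairs \((v,\varphi)\) with \(\varphi(v)=1\). It is not a condition on lines alone, and \(v^{T}Av=0\) is not conjugation-invariant.
\item The number of admissible \(\varphi\) for a given \(v\) depends on whether \(v\) is an eigenvector of \(A\), and you never evaluate the resulting weighted sum.
\item ``The total sum computed via a second, symmetric decomposition'' is unspecified, and ``fall back on Kim's induction'' concedes the point.
\end{enumerate}
As it stands, the proposal does not prove the lemma.

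The missing step is short. Suppose \(a_{11}=0\) and \(A\) is invertible. Then the first row of \(A\) is nonzero; let \(i\geq 2\) be the least index with \(a_{1i}\neq 0\). The maps \(A\mapsto(1+tE_{i1})A\), \(t\in F\), add \(t\) times row \(1\) to row \(i\). They have the following properties:
\begin{enumerate}
\item they fix row \(1\), hence preserve \(a_{11}=0\) and the index \(i\);
\item they fix \(\det A\) and act freely;
\item they shift \(\operatorname{Tr}A\) by \(t\,a_{1i}\).
\end{enumerate}
So each orbit contributes \(\psi_F(\operatorname{Tr}A)\sum_{t\in F}\psi_F(t\,a_{1i})=0\). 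This works with the constraint \(\det A=y\) kept in place, so the Gauss-sum expansion is optional. The recursion \(R^{(e)}_y=Q^{e-1}\sum_{a\in F^\times}\psi_F(a)R^{(e-1)}_{y/a}\) then gives the lemma.

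Your averaging idea can also be completed. Let \(Z_0\) be the \(a_{11}=0\) sum, \(T\) the total, and \(P\) the sum over \(A\) with \(Ae_1\in F^\times e_1\); note \(P\) equals your \(a_{11}\neq 0\) term. Counting pairs \((v,\varphi)\) correctly yields \(QZ_0=T-P=Z_0\), hence \(Z_0=0\).

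Compared with the paper, your route is an induction peeling off one row and column at a time. The paper instead uses \(\mathrm{GL}_e=\bigsqcup_w\mathcal{U}_w\dot w\mathcal{B}\) with \(\det\dot w=1\). It kills all non-identity cells at once by the same mechanism: a free unipotent coordinate enters the trace linearly with nonzero coefficient. The factor \(Q^{\binom e2}\) is then read off from the identity cell, with no induction.
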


\begin{proof}
Let \(\mathcal{B}\subset\mathrm{GL}_e\) be the group of invertible upper
triangular matrices, and let \(\mathcal{U}\subset \mathcal{B}\) be the subgroup of upper
triangular matrices whose diagonal entries are all \(1\).  Thus an
element of \(\mathcal{U}\) has the form
\[
1+\sum_{r<s}u_{rs}E_{rs}.
\]
For \(w\in S_e\), choose a monomial matrix \(\dot w\), meaning a
matrix with exactly one nonzero entry in each row and each column,
whose nonzero entries occur in the same positions as the permutation
matrix of \(w\).  We choose these nonzero entries so that
\(\det\dot w=1\), and take \(\dot 1=1\).  Concretely, one may start
with the usual permutation matrix and rescale one of its nonzero
entries by the inverse of its determinant.

In this matrix setting, the Bruhat decomposition says that
\[
\mathrm{GL}_e(F)=\bigsqcup_{w\in S_e} \mathcal{U}_w(F)\dot w\mathcal{B}(F),
\]
where \(\mathcal{U}_w\) is a subset of \(\mathcal{U}\), described as follows.  Write \(w\)
in one-line notation
\[
w=(w(1),w(2),\ldots,w(e)).
\]
An \emph{inversion} of \(w\) is a pair of positions \(a<b\) for which
\(w(a)>w(b)\).  For each such inversion, allow one free
upper-triangular entry in position \((w(b),w(a))\).  These are indeed
upper-triangular positions, since \(w(b)<w(a)\).  All other
upper-triangular entries are set equal to \(0\).  Thus
\[
\mathcal{U}_w=
\left\{
1+\sum_{\substack{a<b\\w(a)>w(b)}} u_{ab}E_{w(b),w(a)}
: u_{ab}\in F
\right\}.
\]
For example, if \(e=3\) and \(w=(2,3,1)\), the inversions are
\((1,3)\) and \((2,3)\), and
\[
\mathcal{U}_w(F)=
\left\{
\begin{pmatrix}
  1&u&v\\0&1&0\\0&0&1
\end{pmatrix}
: u,v\in F
\right\}.
\]
The number of free coordinates in \(\mathcal{U}_w\) is therefore the number of
inversions of \(w\), denoted \(\ell(w)\), and
\(\#\mathcal{U}_w(F)=Q^{\ell(w)}\).

Denote the left-hand side of \eqref{eq:matrix-reduction} by \(R_y\).
The disjoint union above gives the cell decomposition
\[
R_y=\sum_{w\in S_e} R_{y,w},
\]
where each summand is
\[
R_{y,w}
=
\sum_{\substack{u\in \mathcal{U}_w(F),\, b\in \mathcal{B}(F)\\
    \det(u\dot wb)=y}}
\psi_F(\operatorname{Tr}(u\dot wb)).
\]
Fix \(w\in S_e\).  Since \(\det\dot w=1\), the condition \(\det(u\dot wb)=y\) is simply
\(\det b=y\).  Also
\(\operatorname{Tr}(u\dot wb)=\operatorname{Tr}(b(u\dot w))\), and for
each fixed \(u\), the change of variables \(c=bu\) preserves \(\mathcal{B}(F)\)
and the determinant.  Hence
\[
R_{y,w}=\#\mathcal{U}_w(F)\,S_y(\dot w),
\]
where, for \(X\in\mathrm{GL}_e(F)\) and \(\alpha\in F^\times\), we put
\[
S_\alpha(X)=
\sum_{\substack{b\in \mathcal{B}(F)\\ \det b=\alpha}}
\psi_F(\operatorname{Tr}(bX)).
\]

To compute \(S_y(\dot w)\), expand the trace.  If \(X=(X_{rs})\) and \(b=(b_{rs})\), then
\[
\operatorname{Tr}(bX)
=
\sum_r b_{rr}X_{rr}
+
\sum_{r<s} b_{rs}X_{sr}.
\]
This is the key point: each entry below the diagonal of \(X\) appears
as the coefficient of one freely varying upper-triangular entry of
\(b\).

Now suppose \(w\neq 1\).  Then \(\dot w\) has a nonzero entry below the
diagonal, say in position \((s,r)\) with \(s>r\).  In the trace
formula above, the variable \(b_{rs}\) therefore occurs with
coefficient \(X_{sr}\neq 0\).  Summing over \(b_{rs}\in F\), we get
\[
\sum_{b_{rs}\in F}\psi_F(X_{sr}\,b_{rs})
=
0
\]
by linear independence of characters.
Hence \(S_y(\dot w)=0\) and \(R_{y,w}=0\) for every \(w\neq 1\).

It remains to treat the identity cell \(w=1\).  Here \(\dot 1=1\), so no upper off-diagonal variable
appears in \(\operatorname{Tr}(b)\).  Only the diagonal entries of
\(b\) remain, and therefore
\[
S_y(1)=
Q^{\binom{e}{2}}
\sum_{\substack{x_1,\ldots,x_e\in F^\times\\
    x_1\cdots x_e=y}}
\psi_F(x_1+\cdots+x_e).
\]
Since \(\#\mathcal{U}_1(F)=1\), the cell \(w=1\) contributes
\(S_y(1)\), which is exactly the right-hand side of
\eqref{eq:matrix-reduction}.  All other cells contribute zero.
\end{proof}

\begin{proof}[Proof of Theorem~\ref{thm:reduction}]
Write \(F_i=\mathbb{F}_{q^{d_i}}\),
\(\psi_i=\psi\circ\Trace_{F_i/\mathbb{F}_q}\), and
\(\Norm_i=\Norm_{F_i/\mathbb{F}_q}\).  Grouping first by the
componentwise determinants \(y_i=\det A_i\), we obtain
\[
\Kl_M(a;\chi)
=
\sum_{\substack{y_i\in F_i^\times\\
    \prod_i\Norm_i(y_i)=a}}
\prod_i\chi_i(y_i)\,R_i(y_i),
\]
where
\[
R_i(y_i)
=
\sum_{\substack{A_i\in\mathrm{GL}_{n_i}(F_i)\\
    \det A_i=y_i}}
\psi_i(\operatorname{Tr}A_i).
\]
Applying Lemma~\ref{lem:matrix-reduction} to each factor gives
\[
R_i(y_i)
=
q^{d_i\binom{n_i}{2}}
\sum_{\substack{x_{i1},\ldots,x_{in_i}\in F_i^\times\\
    x_{i1}\cdots x_{in_i}=y_i}}
\psi_i(x_{i1}+\cdots+x_{in_i}).
\]
Substituting and expanding the product over \(i\) yields
\[
\Kl_M(a;\chi)
=
q^{\sum_i d_i\binom{n_i}{2}}
\sum_{\substack{x_{ij}\in F_i^\times\\
    \prod_i \Norm_i(x_{i1}\cdots x_{in_i})=a}}
\prod_{i,j}\chi_i(x_{ij})
\psi\!\left(
\sum_i\Trace_{F_i/\mathbb{F}_q}
(x_{i1}+\cdots+x_{in_i})
\right).
\]
The remaining sum is precisely \(\Kl_{B^{\prime}}(a;\eta)\).
\end{proof}

\section{Proof of Theorem~\ref*{thm:intro-inverted-reduction}}

We view \((B^{\prime})^\times\) as the group of invertible diagonal matrices in
\(\prod_i\mathrm{GL}_{n_i}(\mathbb{F}_{q^{d_i}})\), and let
\(\nu\colon (B^{\prime})^\times\to\mathbb{F}_q^\times\) be the norm map.  For
each factor, let \(S_{n_i}\) be the permutation group on
\(\{1,\ldots,n_i\}\), and set
\[
W=S_{n_1}\times\cdots\times S_{n_k}.
\]
For \(w=(w_1,\ldots,w_k)\in W\), write
\[
\ell_d(w)=\sum_i d_i\,\ell(w_i),
\]
where \(\ell(w_i)\) is the inversion number of \(w_i\) in one-line
notation, as in Lemma~\ref{lem:matrix-reduction}.
Put
\[
N=\sum_i d_i\binom{n_i}{2}.
\]

\begin{lemma}\label{lem:inverted-affine}
Let \(L\colon\mathbb{A}^r(\mathbb{F}_q)\to\mathbb{F}_q\) be a
nonconstant affine-linear function.  Then
\[
\sum_{\substack{z\in\mathbb{A}^r(\mathbb{F}_q)\\L(z)\neq 0}}
\psi(L(z)^{-1})
=
-q^{r-1}.
\]
This is the inverted-character analogue of the cancellation identity
\(\sum_{z\in\mathbb{F}_q}\psi(cz)=0\) for \(c\neq 0\).
\end{lemma}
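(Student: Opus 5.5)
The plan is to reduce to the one-variable identity by a fibration argument. Since \(L\) is nonconstant and affine-linear, write \(L(z)=c_1z_1+\cdots+c_rz_r+c_0\) with some \(c_j\neq 0\). The map \(L\colon\mathbb{A}^r(\mathbb{F}_q)\to\mathbb{F}_q\) is then a surjective affine map. Each fibre \(L^{-1}(t)\) is an affine hyperplane with exactly \(q^{r-1}\) points: after reordering, \(c_1\neq0\), and \(z_2,\dots,z_r\) may be chosen freely while \(z_1\) is then uniquely determined.

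Grouping the sum according to the value \(t=L(z)\) gives
\[
\sum_{\substack{z\in\mathbb{A}^r(\mathbb{F}_q)\\L(z)\neq 0}}\psi(L(z)^{-1})
=q^{r-1}\sum_{t\in\mathbb{F}_q^\times}\psi(t^{-1}).
\]

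Since \(t\mapsto t^{-1}\) is a bijection of \(\mathbb{F}_q^\times\), the inner sum equals \(\sum_{s\in\mathbb{F}_q^\times}\psi(s)\). Because \(\psi\) is nontrivial, \(\sum_{s\in\mathbb{F}_q}\psi(s)=0\) by orthogonality of characters. Removing the term \(\psi(0)=1\) leaves \(-1\), so the whole sum is \(-q^{r-1}\), as claimed.

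No step presents a real obstacle. The only point needing care is the uniform fibre count \(q^{r-1}\), which uses nonconstancy of \(L\); this is also why the hypothesis excludes constant \(L\), where the answer would be \(q^r\psi(L^{-1})\) or \(0\) instead.
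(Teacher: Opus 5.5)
Your proof is correct and is essentially the paper's argument. Both use that every value of \(L\) is attained exactly \(q^{r-1}\) times, then conclude with \(\sum_{u\in\mathbb{F}_q^\times}\psi(u^{-1})=\sum_{u\in\mathbb{F}_q^\times}\psi(u)=-1\).
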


\begin{proof}
Every value of \(L\) occurs exactly \(q^{r-1}\) times.  Hence the
sum equals
\[
q^{r-1}\sum_{u\in\mathbb{F}_q^\times}\psi(u^{-1})
=
q^{r-1}\sum_{u\in\mathbb{F}_q^\times}\psi(u)
=
-q^{r-1}.
\]
\end{proof}

\begin{proposition}\label{prop:inverted-master}
With notation as above, for every \(a\in\mathbb{F}_q^\times\),
\begin{equation}\label{eq:inverted-master}
\IKl_M(a;\chi)
=
q^N\, \IKl_{B^{\prime}}(a;\eta)
-
\left(\sum_{\substack{w\in W\\ w\neq 1}} q^{N-1+\ell_d(w)}\right)
\sum_{\substack{t\in (B^{\prime})^\times\\ \nu(t)=a}} \eta(t).
\end{equation}
The sum over \(w\neq 1\) is interpreted as zero if \(W=\{1\}\).
\end{proposition}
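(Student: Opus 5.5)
We follow the Bruhat-cell argument of Lemma~\ref{lem:matrix-reduction}, now run on the whole product \(\prod_i\mathrm{GL}_{n_i}(F_i)\) at once, with \(F_i=\mathbb{F}_{q^{d_i}}\). We use the product Bruhat decomposition
\[
M^\times=\bigsqcup_{w\in W}\mathcal{U}_w\dot w\,\mathcal{B}_M .
\]
Here \(\mathcal{U}_w\) is the product of the cells from the lemma, so \(\#\mathcal{U}_w=q^{\ell_d(w)}\), and \(\dot w\) has reduced norm \(1\). The character \(\chi\) is of determinant type, so \(\chi(u\dot w b)=\chi(b)\) and \(\Nrd(u\dot wb)=\Nrd(b)\). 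Also \(\Trd(u\dot w b)=\Trd(bu\dot w)\), and the substitution \(c=bu\) preserves \(\mathcal{B}_M\), \(\chi\) and \(\Nrd\). Hence
\[
\IKl_M(a;\chi)=\sum_{w\in W} q^{\ell_d(w)}\,T_w,
\qquad
T_w=\sum_{\substack{b\in\mathcal{B}_M,\ \Nrd(b)=a\\ \Trd(b\dot w)\neq 0}}\chi(b)\,\psi\bigl(\Trd(b\dot w)^{-1}\bigr).
\]

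\emph{Identity cell.} For \(w=1\) we have \(\Trd(b)\), which depends only on the diagonal \(t\in(B')^\times\) of \(b\). The \(q^N\) strictly upper-triangular entries are free, and \(\chi(b)=\eta(t)\). So \(T_1=q^N\IKl_{B'}(a;\eta)\).

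\emph{Cells with \(w\neq1\).} Write \(b=t\cdot v\) with \(t\) diagonal and \(v\) unipotent, and fix the diagonal \(t\). As in the proof of Lemma~\ref{lem:matrix-reduction},
\[
\Trd(b\dot w)=\sum_i \Trace_{F_i/\mathbb{F}_q}\bigl(\operatorname{Tr}(b_i\dot w_i)\bigr)
\]
is an affine-linear function of the \(N\) coordinates (over \(\mathbb{F}_q\)) of the strictly upper entries. Since \(w\neq1\), some factor \(\dot w_i\) has a nonzero entry \(X_{sr}\) below the diagonal. That entry multiplies the free variable \(b_{rs}\in F_i\) through \(\Trace_{F_i/\mathbb{F}_q}(X_{sr}b_{rs})\). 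This is a nonzero \(\mathbb{F}_q\)-linear form, because the trace form is nondegenerate. So the affine function is nonconstant for every fixed \(t\), and Lemma~\ref{lem:inverted-affine} with \(r=N\) gives \(-q^{N-1}\) for the inner sum. Summing over \(t\) with \(\nu(t)=a\), weighted by \(\eta(t)\), yields
\[
T_w=-q^{N-1}\sum_{\nu(t)=a}\eta(t).
\]
Multiplying by \(q^{\ell_d(w)}\) and summing over \(w\neq1\) gives exactly \eqref{eq:inverted-master}.

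\emph{Main obstacle.} The delicate point is confirming that the nonconstancy is uniform in \(t\). In the trace expansion \(\operatorname{Tr}(bX)=\sum_r b_{rr}X_{rr}+\sum_{r<s}b_{rs}X_{sr}\), the coefficient of \(b_{rs}\) is \(X_{sr}\), which does not depend on the diagonal. Parametrizing \(b\) by its own entries (the diagonal \(t\) plus free upper entries, rather than \(tv\)) makes this clean, since the map to the entries is a bijection. One must also check that using \(\mathbb{F}_q\)-coordinates on \(F_i\) correctly accounts for the count \(q^{N}\) and the loss of one power of \(q\).
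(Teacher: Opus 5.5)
Your proposal is correct and follows the paper's proof essentially step for step. The shared steps are the product Bruhat decomposition, absorbing $\mathcal{U}_w$ via $c=bu$ and cyclicity of the trace, the identity cell giving $q^N\IKl_{B^{\prime}}(a;\eta)$, and, for $w\neq 1$, a nonconstant affine-linear reduced trace evaluated by Lemma~\ref{lem:inverted-affine}. The one cosmetic difference is that you read off the coefficient of the free entry $b_{rs}$ directly as $(\dot w_i)_{sr}$, independent of $t$, whereas the paper writes $c=tu$ and rotates to $\Trd(u\dot w t)$, obtaining the coefficient $(\dot w_i t_i)_{sr}$, which is equally nonzero.
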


\begin{proof}
Write \(F_i=\mathbb{F}_{q^{d_i}}\).  In the \(i\)-th factor
\(\mathrm{GL}_{n_i}(F_i)\), let \(\mathcal{B}_i\) be the group of
invertible upper triangular matrices, and let \(\mathcal{U}_i\) be the
subgroup of upper triangular matrices with diagonal entries all equal
to \(1\).

For each \(w_i\in S_{n_i}\), choose a determinant-one monomial
representative \(\dot w_i\), as in the proof of
Lemma~\ref{lem:matrix-reduction}.  For
\(w=(w_1,\ldots,w_k)\in W\), put
\(\dot w=(\dot w_1,\ldots,\dot w_k)\).  Also put
\(\mathcal{B}_M=\prod_i\mathcal{B}_i(F_i)\).  The product of the
matrix cell decompositions is the disjoint union
\[
M^\times
=
\bigsqcup_{w\in W}
\mathcal{U}_w\, \dot w\, \mathcal{B}_M,
\]
where
\[
\mathcal{U}_w=
\mathcal{U}_{w_1}(F_1)\times\cdots\times
\mathcal{U}_{w_k}(F_k).
\]
Here \(\mathcal{U}_{w_i}(F_i)\subset \mathcal{U}_i(F_i)\) is the
small affine space from Lemma~\ref{lem:matrix-reduction}: for
\(w_i=(w_i(1),\ldots,w_i(n_i))\), one allows a free upper-triangular
entry \(u_{w_i(b),w_i(a)}\) for each inversion \(a<b\) with
\(w_i(a)>w_i(b)\), and sets all other upper off-diagonal entries to
\(0\).  Thus
\[
\#\mathcal{U}_w=q^{\sum_i d_i\ell(w_i)}=q^{\ell_d(w)}.
\]

Decompose the inverted sum accordingly:
\begin{equation}\label{eq:ikl-cell-decomp}
\IKl_M(a;\chi)=\sum_{w\in W}\IKl_w(a;\chi),
\end{equation}
where
\[
\IKl_w(a;\chi)
=
\sum_{\substack{u^{\prime}\in \mathcal{U}_w,\, b\in \mathcal{B}_M\\
    \Nrd(u^{\prime}\dot w b)=a\\ \Trd(u^{\prime}\dot w b)\neq 0}}
\chi(b)\psi(\Trd(u^{\prime}\dot w b)^{-1}).
\]
The determinant-type character is unchanged by \(u^{\prime}\) and by the
determinant-one representative \(\dot w\).

Fix \(w\).  Since the representatives \(\dot w_i\) have determinant
\(1\), the condition \(\Nrd(u^{\prime}\dot w b)=a\) is simply
\(\Nrd(b)=a\).  Also, by cyclicity of the matrix trace in each factor,
\(\Trd(u^{\prime}\dot w b)=\Trd(bu^{\prime}\dot w)\).  For fixed
\(u^{\prime}\), the change of variables \(c=bu^{\prime}\) preserves
\(\mathcal{B}_M\), the reduced norm, and the character.  Hence
\[
\IKl_w(a;\chi)
=
\#\mathcal{U}_w
\sum_{\substack{c\in \mathcal{B}_M\\
    \Nrd(c)=a\\ \Trd(c\dot w)\neq 0}}
\chi(c)\psi(\Trd(c\dot w)^{-1}).
\]
Every upper triangular matrix \(c\in \mathcal{B}_M\) decomposes uniquely
as \(c=tu\), where \(t\in (B^{\prime})^\times\) is viewed as a diagonal
matrix and \(u\in \mathcal{U}\) is upper-unitriangular.  Here
\[
\mathcal{U}=\mathcal{U}_1(F_1)\times\cdots\times \mathcal{U}_k(F_k)
\]
is the full product of upper-unitriangular groups, an affine space of
dimension \(N=\sum_i d_i\binom{n_i}{2}\) over \(\mathbb{F}_q\).  Using
cyclicity once more, \(\Trd(tu\dot w)=\Trd(u\dot w t)\).  Substituting
\(c=tu\) gives
\begin{equation}\label{eq:ikw-expanded}
\IKl_w(a;\chi)
=
q^{\ell_d(w)}
\sum_{\substack{t\in (B^{\prime})^\times\\ \nu(t)=a}}
\eta(t)
\sum_{\substack{u\in \mathcal{U}\\
    \Trd(u\dot w t)\neq 0}}
\psi(\Trd(u\dot w t)^{-1}).
\end{equation}

When \(w=1\), \(\Trd(ut)=\Trd(t)\) for every \(u\in \mathcal{U}\).
Thus \eqref{eq:ikw-expanded} gives
\begin{equation}
\label{eq:w1}
\IKl_1(a;\chi)
=
q^N\IKl_{B^{\prime}}(a;\eta).
\end{equation}

If instead \(w\neq 1\), then some \(w_i\) is a non-identity permutation,
so its monomial representative has a nonzero entry below the diagonal.
Say this entry is in row \(s\), column \(r\), with \(s>r\).  In the
factor \(\mathcal{U}_i\), the coordinate \(u_{rs}\) is a free variable.
In this factor, put \(Y_i=\dot w_i t_i\).  Then
\[
\operatorname{Tr}(u_iY_i)
=
\operatorname{Tr}(Y_i)+\sum_{a<b}(u_i)_{ab}(Y_i)_{ba},
\]
so \(u_{rs}\) occurs in \(\operatorname{Tr}(u_i\dot w_i t_i)\) with
coefficient \((Y_i)_{sr}\neq 0\).  Here the corresponding entry of
\(\dot w_i\) is nonzero, and every diagonal entry of \(t_i\) is nonzero.

After applying the field trace
\(\Trace_{F_i/\mathbb{F}_q}\), the resulting function of the
\(F_i\)-variable \(u_{rs}\) is still nonconstant, because the pairing
\((c,z)\mapsto \Trace_{F_i/\mathbb{F}_q}(cz)\) is nondegenerate.  Thus
\[
u\longmapsto \Trd(u\dot w t)
\]
is a nonconstant affine-linear function on the \(N\)-dimensional
\(\mathbb{F}_q\)-affine space \(\mathcal{U}\).  Lemma~\ref{lem:inverted-affine}
therefore gives
\[
\sum_{\substack{u\in \mathcal{U}\\ \Trd(u\dot w t)\neq 0}}
\psi(\Trd(u\dot w t)^{-1})
=
-q^{N-1}.
\]
Substituting in \eqref{eq:ikw-expanded} yields, for \(w\neq 1\),
\begin{equation}
\label{eq:w2}
\IKl_w(a;\chi)
=
-q^{\ell_d(w)}q^{N-1}
\sum_{\substack{t\in (B^{\prime})^\times\\ \nu(t)=a}}\eta(t).
\end{equation}

Adding these contributions \eqref{eq:w1},\eqref{eq:w2} in
\eqref{eq:ikl-cell-decomp} gives
\[
\IKl_M(a;\chi)
= q^N \, \IKl_{B^{\prime}}(a;\eta)
- \sum_{\substack{w\in W\\w\neq 1}} q^{N-1+\ell_d(w)}
\sum_{\substack{t\in (B^{\prime})^\times \\ \nu(t)=a}}\eta(t).
\]
This is \eqref{eq:inverted-master}.
\end{proof}

\begin{proof}[Proof of Theorem~\ref{thm:intro-inverted-reduction}]
For the first part of the theorem, assume that \(\eta\) is nontrivial
on \(\ker(\nu)\).  For \(w\neq 1\), the fiber \(\nu^{-1}(a)\) is a
coset of the finite subgroup \(\ker(\nu)\subset (B^{\prime})^\times\).
Since \(\eta\) is nontrivial on \(\ker(\nu)\), character orthogonality
on this finite group gives
\[
\sum_{\substack{t\in (B^{\prime})^\times\\ \nu(t)=a}}\eta(t)
=
\sum_{t\in \nu^{-1}(a)}\eta(t)
=
0.
\]
Hence the second term in Proposition~\ref{prop:inverted-master}
vanishes, giving the claimed formula.

For the second part, assume that \(\eta\) is trivial on \(\ker(\nu)\), so
\(\eta=\rho\circ\nu\) for a character
\(\rho\) of \(\mathbb{F}_q^\times\).  Then every element \(t\) with
\(\nu(t)=a\) has the same \(\eta\)-value, namely \(\rho(a)\).  Hence
\[
\sum_{\nu(t)=a}\eta(t)
=
\rho(a)\#\nu^{-1}(a)
=
\rho(a)\frac{\#(B^{\prime})^\times}{q-1}.
\]
This gives the equivalent formula
\[
\IKl_M(a;\chi)
=
q^N\IKl_{B^{\prime}}(a;\eta)
-
\rho(a)\frac{\#(B^{\prime})^\times}{q-1}
\sum_{\substack{w\in W\\w\neq 1}}q^{N-1+\ell_d(w)}.
\]
Finally,
\[
\#\mathcal{B}_M=\#(B^{\prime})^\times q^N,
\qquad
\#M^\times=\#\mathcal{B}_M\sum_{w\in W}q^{\ell_d(w)}.
\]
Consequently,
\begin{align*}
  \frac{\#(B^{\prime})^\times}{q-1}
  \sum_{\substack{w\in W\\ w\neq 1}}q^{N-1+\ell_d(w)}
  &=
    \frac{\#(B^{\prime})^\times q^{N-1}}{q-1}
    \left(\sum_{w\in W}q^{\ell_d(w)}-1\right) \\
  &=
    \frac{\#\mathcal{B}_M}{q(q-1)}
    \left(\frac{\#M^\times}{\#\mathcal{B}_M}-1\right) \\
  &=
    \frac{\#M^\times-\#\mathcal{B}_M}{q(q-1)}.
\end{align*}
Substituting this into \eqref{eq:inverted-master} proves the second
formula and completes the proof.
\end{proof}

\begin{remark}
One may put these sums in a broader representation-theoretic form.  Let
\(G\) be a reductive group over \(\mathbb{F}_q\), let
\(\delta\colon G\to\mathbb{G}_{\mathrm{m}}\) be a one-dimensional
representation, and let \(\rho\colon G\to \mathrm{GL}(V)\) be another
finite-dimensional representation.  A natural Kloosterman sum attached
to this data is
\[
\Kl_{G,\rho,\delta}(a)
=
\sum_{\substack{g\in G(\mathbb{F}_q)\\ \delta(g)=a}}
\psi(\operatorname{Tr}(\rho(g))),
\qquad a\in\mathbb{F}_q^\times.
\]
One may also insert a multiplicative character of \(G(\mathbb{F}_q)\).

In this paper, the sums we consider are associated to the group
\begin{equation*}
G = \prod_{i=1}^k \mathrm{Res}_{\mathbb{F}_{q^{d_i}}} \mathrm{GL}_{n_{i}}.
\end{equation*}
Note that \(G\otimes_{\mathbb{F}_q} \overline{\mathbb{F}}_q\)
is isomorphic to a direct product
\begin{equation*}
\prod_{i=1}^k \underbrace{\mathrm{GL}_{n_{i}}\times \cdots \times \mathrm{GL}_{n_i}}_{d_i\text{ copies}}.
\end{equation*}
The character \(\delta\) is induced by the determinant morphisms, and
\(\rho\) is given by the direct sum of the standard representations of
the factors.

Such Kloosterman sums associated to reductive groups and their
representations do \emph{not} satisfy square-root cancellation in
general.  The point is that non-identity Bruhat cells may contribute.
For instance, take \(G=\mathrm{GL}_4\), let \(\delta=\det\), and let
\(\rho=\wedge^2\) be the second exterior power of the standard
representation.  If \(U\) is the upper-unitriangular subgroup,
\(t=\operatorname{diag}(t_1,t_2,t_3,t_4)\), and \(w=(12)(34)\), then
for the corresponding permutation matrix \(\dot w\) one computes
\[
\operatorname{Tr}(\wedge^2(u\dot w t))
=
-t_1t_2-t_3t_4+t_1t_3u_{12}u_{34},
\qquad u\in U.
\]
Hence
\[
\sum_{u\in U(\mathbb{F}_q)}
\psi(\operatorname{Tr}(\wedge^2(u\dot w t)))
=
q^5\psi(-t_1t_2-t_3t_4),
\]
which is not zero.

Thus the vanishing of the non-identity cells in the ordinary
Kloosterman sum considered above is a special feature of the reduced
trace on products of general linear groups, not a general phenomenon
for arbitrary representations.

On the other hand, if one allows matrix coefficients in the definition,
then, as a general feature of the \(\ell\)-adic Fourier transform,
square-root cancellation does hold if the matrix
coefficient is generic.  See \cite{fu-li_hypergeometric-reductive-groups}.
\end{remark}

\bibliographystyle{amsplain}
\bibliography{nt}

\end{document}